    \numberwithin{equation}{section}
\newcommand{\R}{\mathbb{R}}
\newcommand{\N}{\mathbb{N}}
\newcommand{\Q}{\mathbb{Q}}
\newcommand{\C}{\mathbb{C}}
\newcommand{\bR}{\mathbb{R}}
\newcommand{\bS}{\mathbb{S}}
\newcommand{\e}{\varepsilon}
\newcommand{\E}{\mathcal{E}}
\newcommand{\B}{\mathcal{B}}
\newcommand{\Hi}{\mathcal{H}}
\newcommand{\M}{\mathcal{M}}
\newcommand{\Conv}{{\rm conv\,}}
\newcommand{\ch}{\text{conv}}
\newcommand{\Span}{\text{span}\,}
\newcommand{\diag}{\text{diag}}
\DeclarePairedDelimiterX{\inp}[2]{\langle}{\rangle}{#1, #2}
\newtheorem{theorem}{Theorem}
 \newtheorem{lemma}[theorem]{Lemma}
 \newtheorem{proposition}[theorem]{Proposition}
 \theoremstyle{definition}
 \newtheorem{definition}[theorem]{Definition}
 \newtheorem{example}[theorem]{Example}
\newtheorem*{rep@theorem}{\rep@title}
\newcommand{\newreptheorem}[2]{
\newenvironment{rep#1}[1]{%
\def\rep@title{#2 \ref{##1}}%
\begin{rep@theorem}}%
{\end{rep@theorem}}}
\begin{document}

\thanks{The first and second authors were partially supported by FCT through CAMGSD, project UID/MAT/04459/2019. The third author was partially supported by FCT through CMA-UBI, project UIDB/00212/2020. The fourth author was partially supported by FCT through CIMA, project UIDB/04674/2020.}

\title{A note on the essential numerical range of block diagonal operators}

\author[L. Carvalho]{Lu\'{\i}s Carvalho}
\address{Lu\'{\i}s Carvalho, ISCTE - Lisbon University Institute\\    Av. das For\c{c}as Armadas\\     1649-026, Lisbon\\   Portugal, and Center for Mathematical Analysis, Geometry,
and Dynamical Systems\\ Mathematics Department,
Instituto Superior T\'ecnico, Universidade de Lisboa\\  Av. Rovisco Pais, 1049-001 Lisboa,  Portugal}
\email{luis.carvalho@iscte-iul.pt}
\author[C. Diogo]{Cristina Diogo}
\address{Cristina Diogo, ISCTE - Lisbon University Institute\\    Av. das For\c{c}as Armadas\\     1649-026, Lisbon\\   Portugal, and Center for Mathematical Analysis, Geometry,
and Dynamical Systems\\ Mathematics Department,
Instituto Superior T\'ecnico, Universidade de Lisboa\\  Av. Rovisco Pais, 1049-001 Lisboa,  Portugal
}
\email{cristina.diogo@iscte-iul.pt}
\author[S. Mendes]{S\'{e}rgio Mendes}
\address{S\'{e}rgio Mendes, ISCTE - Lisbon University Institute\\    Av. das For\c{c}as Armadas\\     1649-026, Lisbon\\   Portugal\\ and Centro de Matem\'{a}tica e Aplica\c{c}\~{o}es \\ Universidade da Beira Interior \\ Rua Marqu\^{e}s d'\'{A}vila e Bolama \\ 6201-001, Covilh\~{a}}
\email{sergio.mendes@iscte-iul.pt}
\author[H. Soares]{Helena Soares}
\address{Helena Soares, ISCTE - Lisbon University Institute\\    Av. das For\c{c}as Armadas\\     1649-026, Lisbon\\   Portugal\\ and Centro de Investigação em Matemática e Aplicações \\ Universidade de Évora \\  Colégio Luís António Verney, Rua Romão Ramalho, 59, 7000-671 Évora, Portugal}
\email{helena.soares@iscte-iul.pt}
\subjclass[2010]{47A12}

\keywords{Block diagonal operator, essential numerical range}
\date{\today}


\begin{abstract}
In this note we characterize the essential numerical range of a block diagonal  o\-pe\-ra\-tor $T=\bigoplus_i T_i$ in terms of the numerical ranges $\{W(T_i)\}_i$ of its components. Specifically, the essential numerical range of $T$ is the convex hull of the limit superior of $\{W(T_i)\}_i$. This characterization can be simplified further. In fact, we prove the existence of a decomposition of $T$ for which the convex hull is not required.
\end{abstract}

\maketitle
\section*{Introduction}
\onehalfspacing

Block diagonal operators are a generalization of diagonal operators, hence it is not surprising that many properties of the former are emulated from the latter. An example is the numerical range $W(T_\oplus)=\Conv\left(\{W(T_n)\}_n\right)$ of a block diagonal operator $T_\oplus=\bigoplus_n T_n$, which can be seen as generalizing the formula of the numerical range of a diagonal operator $D=\diag \left((\lambda_n)_n\right)$ given by $W(D)=\Conv\left(\{\lambda_n\}_n\right)$. That is,
\[
W(D) =\Conv\big(\{\lambda_n\}_n\big)\,\,\textrm{ and }\,\,
W(T_\oplus)=\Conv\Big(\big\{W(T_n)\big\}_n\Big).
\]
In the same vein, one can ask whether $W_e(T_\oplus)$, the essential numerical range of a block diagonal operator $T_\oplus$, maintains the analogy with $W_e(D)$, the essential numerical range of a diagonal operator $D$.

Furthermore, one can also ask, since block diagonal operators are defined with respect to a decomposition,  which of such properties are independent from the choice of the decomposition. 

This note is about the pursuit of the analogy referred above and provides two descriptions of the essential numerical range: one that is independent of the choice of the decomposition (see Theorem \ref{BlocksThm}) and another one, simpler, that it is not (Proposition \ref{proposition}).



To compute the essential numerical range when $T_\oplus=\bigoplus_n T_n$ is a block diagonal operator, we are led to consider the limit superior of a sequence of sets. However, a slight adaptation is required since the essential numerical range is a convex and closed set. Thus, the main result of the paper is Theorem \ref{BlocksThm}, which states that for any decomposition the essential numerical range of a block diagonal operator $T_\oplus=\bigoplus_n T_n$ is given by
\begin{equation}\label{result}
W_e\left(T_\oplus\right)=\Conv\left(\bigcap_{k\geq 1}\overline{\bigcup_{n \geq k} W(T_n)}\right)=
\Conv\left( \overline \lim W(T_n) \right),
\end{equation}
where we use the (closed) limit superior of a sequence of sets $X_n$, $n\in\N$, as $\overline \lim X_n=\bigcap_k \overline{\bigcup_{n \geq k} X_n}$. On the other hand, the essential numerical range of a diagonal operator $D=\diag\left((\lambda_n)_n\right)$ is the closed and convex set generated by $\overline\lim\{\lambda_n\}$, the set of the limit points of the diagonal, i.e. $W_e(D)=\Conv\left(\overline\lim \{\lambda_n\}\right)$.

In this way, we recover the analogy from the numerical range of a diagonal and a block diagonal operator. In particular, the essential numerical range of a diagonal operator is a special instance of the block diagonal case. We deduce from (\ref{result}) that
\[
W_e(D) =\Conv\left(\overline\lim \{\lambda_n\}\right)\,\,\textrm{ and }\,\,
W_e(T_\oplus)=\Conv\left( \overline \lim W(T_n) \right).
\]

Regarding the existence of a decomposition $T_\oplus=\bigoplus_n \widetilde{T}_n$ where the convex hull is not required, we prove:
\[
W_e(T)= \bigcap_{k\geq 1}\overline{\bigcup_{n \geq k} W(\tilde{T}_n)}.
\]

We remark that throughout the text, when we refer to limit superior we always mean the closed limit superior as above and not the usual definition.



We now recall some terminology and fix the notation used throughout the text.

Let $\mathcal{H}$ be a complex separable  Hilbert space with inner product $\langle \cdot\,,\cdot\,\rangle$.  Denote $\bS_\Hi=\{x\in\Hi:\inp{x}{x}=\|x\|^2=1\}$ the unit sphere in $\Hi$. 
As usual, $\mathcal{B}(\Hi)$ denotes the algebra of bounded linear operators on $\Hi$ and $\mathcal{K}(\Hi)$ its closed ideal of compact operators.

The numerical range of an operator $T\in\mathcal{B}(\mathcal{H})$ is the set
\[
W(T)=\{\langle Tx,x\rangle:x\in \bS_\Hi\}\subset\C,
\]
and the essential numerical range of $T$ is defined by
\[
W_{e}(T)=\bigcap_{K\in\mathcal{K}(\mathcal{H})}\overline{W(T+K)}.
\]
It is well known that $W_e(T)\subseteq \overline{W(T)}$, see \cite{FSW}. 

Now we introduce the notion of essential sequence. As usual, we write $x_n \rightharpoonup x$ if a sequence $(x_n)_n$ in $\Hi$ converges to $x\in \Hi$ in the weak topology.

\begin{definition}
   An essential sequence $(x_n)_n\subset \Hi$ for $\omega \in\C$ is a sequence of unit vectors such that $x_n\rightharpoonup 0$  and $\langle Tx_n, x_n\rangle\rightarrow q$.
\end{definition}
The existence of an essential sequence for an element $\omega\in \C$ is a  necessary and sufficient condition for $\omega$ be in $W_e(T)$ (see \cite[Corollary in page 189]{FSW}).  

We will be especially dealing with convex sets, so it is useful to introduce the simplex of the space of sequences $\ell^2(\bR)$, that is $\Delta_{\ell^2(\bR)}=\{(a_n)_n\in \ell^2(\bR): \sum_{n\geq 1} a_n=1, a_n\geq0 \}$. Furthermore, the notion of extreme point will also be important. A point $z\in W(T)$ is called an extreme point if, given a convex combination $z=\alpha z_1+(1-\alpha)z_2$, with $\alpha\in(0,1)$ and $z_1,z_2\in W(T)$, then necessarily $z=z_1=z_2$.


Although it is folklore knowledge, we introduce a detailed account on block diagonal operators (which is usually overlooked) to facilitate the proofs.
Set $(t_n)_n$ to be an increasing sequence of non-negative integers, with $t_0=0$, and $\ell_n=t_n-t_{n-1}$ for positive integers. Fix orthonormal bases $\mathcal{U}_n=\{u_i^{(n)} \,:\, 1\leq i\leq \ell_n\}$ for $\C^{\ell_n}$, for all $n\in\N$. Let $\left(A_n\right)_n$ be a sequence of matrices, where each matrix $A_n\in\M_{\ell_n}(\C)$ is defined with respect to the basis $\mathcal{U}_n \in \C^{\ell_n}$ so that its entries are given by
\[
a_{ij}^{(n)}= \left\langle A_nu_j^{(n)},u_i^{(n)}\right\rangle,
\]
for $1\leq i,j\leq \ell_n$.

Fix a Hilbert basis $\mathcal{B}=\{e_n:n\in\N\}$ for $\Hi$. The elements of $\mathcal{B}$ may be assembled as
\[
\mathcal{B}=\{\underbrace{e_1,\ldots,e_{t_1}}_{\mathcal{B}_1},\underbrace{e_{t_1+1},\ldots,e_{t_2}}_{\mathcal{B}_2},\ldots,
\underbrace{e_{t_{n-1}+1},\ldots,e_{t_n}}_{\mathcal{B}_n},\ldots\}.
\]
That is, $\mathcal{B}=\bigcup_{n\geq 1} \mathcal{B}_n$, where $\mathcal{B}_n=\{e^{(n)}_i=e_{t_{n-1}+i}:1\leq i\leq \ell_n\}$. According to this decomposition, $\Hi$ is the direct  sum of finite dimensional orthogonal Hilbert (sub)spaces  $\Hi_n := \Span {\mathcal{B}_n}$ of $\Hi$ and we have $\Hi=\bigoplus_{n\geq 1}\Hi_n$. In particular, $x\in\Hi$ if and only if $x=(x_n)_{n}$, with $x_n\in\Hi_n$ and $\sum_{n\geq 1}\lVert x_n\rVert^2 < \infty$. The inner product is $\langle x,y \rangle=\sum_{n\geq 1}\left\langle x_n,y_n \right\rangle_{\Hi_n}$, where $x=(x_n)_n$, $y=(y_n)_n$ with $x_n, y_n \in  \Hi_n$.


%


Taking into account this decomposition, a block diagonal bounded operator $T:\Hi \to \Hi$ (with respect to the decomposition $\Hi=\bigoplus_{n\geq 1}\Hi_n$) can be defined as follows. 
For each $n\in\N$, let $T_n:\Hi_n\to\Hi_n$ be the bounded operator given by
\[
T_n\left(e^{(n)}_i\right)=\left(\phi_n A_n \phi_n^{-1}\right)\left(e^{(n)}_i\right), \,\,\forall\, i=1,\ldots,\ell_n,
\]
where $\phi_n: \C^{\ell_n} \to \Hi_n$ is an isometry of Hilbert spaces defined by 
\[
\phi_n\left(u_i^{(n)}\right)=e_i^{(n)}.
\]
Now define 
\begin{align*}
T=\bigoplus&_{n\geq 1}T_n: \Hi\to \Hi\\
x&\mapsto T(x)=T((x_n)_n)=\left(T_n(x_n)\right)_n
\end{align*}
In order to $T$ to be bounded it is necessary and sufficient that $\lVert T \rVert=\sup_{n}\lVert T_n\rVert <\infty$. Usually, by abuse of notation, we identify the operator $T_n$ with the matrix $A_n$. We thus write $T=\bigoplus A_n$ and we have 
\[
\inp{Tx}{x}=\sum_n \inp{Tx_n}{x_n}=\sum_n \inp{T_nx_n}{x_n}=\sum_n \inp{A_n\phi_n^{-1}x_n}{\phi_n^{-1}x_n}.
\]
The construction of a block diagonal operator $T=\bigoplus_n A_n$ depends on the fixed decomposition $\Hi=\bigoplus \Hi_n$ (and on the operators $A_n: \Hi_n \to \Hi_n$). Obviously, this decomposition may not be unique since it may exist some other decomposition, $\Hi=\bigoplus_n \tilde \Hi_n$, giving rise to a new block diagonal decomposition $T=\bigoplus_n \tilde A_n$, where $\tilde A_n: \tilde\Hi_n \to \tilde\Hi_n$. A natural question is for which decompositions of $T$ is the computation of the essential numerical range simpler. Proposition \ref{proposition} provides a partial answer to this question for it provides, as already mentioned, a decomposition for which the convex hull in equation (\ref{result}) can be discarded.




\section*{Essential numerical range}

Our main result states that the essential numerical range of $T$ is the convex hull of the set $\bigcap_{k=1}^\infty \overline{\bigcup_{n \geq k} W(A_n)}$, i.e. it is the (closed) convex hull of the limit superior of the sets $W(A_n)$, for all $n$. To prove it, we need the following Lemma.
\begin{lemma}
If $S_k \subset \C$ be a decreasing sequence of compact sets, then
\[
 \bigcap_{{k\geq 1}} \mathrm{conv}\big(S_k\big)= \mathrm{conv}\big(\bigcap_{k\geq 1} S_k \big).
\]
\end{lemma}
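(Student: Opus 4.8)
The inclusion $\mathrm{conv}\big(\bigcap_{k\geq 1} S_k\big) \subseteq \bigcap_{k\geq 1}\mathrm{conv}(S_k)$ is immediate: for each $k$ we have $\bigcap_{j\geq 1} S_j \subseteq S_k$, hence $\mathrm{conv}\big(\bigcap_{j} S_j\big)\subseteq \mathrm{conv}(S_k)$, and intersecting over $k$ gives the claim. So the work is entirely in the reverse inclusion. I would fix a point $z\in \bigcap_{k\geq 1}\mathrm{conv}(S_k)$ and aim to write it as a convex combination of points of $S_\infty := \bigcap_{k\geq 1} S_k$. Since we are in $\C\cong\R^2$, Carathéodory's theorem lets me write, for each $k$,
\[
z = \sum_{i=1}^{3} \lambda_i^{(k)}\, s_i^{(k)}, \qquad s_i^{(k)}\in S_k,\ \lambda_i^{(k)}\geq 0,\ \sum_{i=1}^3 \lambda_i^{(k)}=1,
\]
using that each $\mathrm{conv}(S_k)$ is the convex hull of a compact subset of the plane and that $S_k$ itself is compact (so $\mathrm{conv}(S_k)$ is compact and every point is a convex combination of at most three points of $S_k$).

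The key step is a compactness/diagonalization argument. Because $(S_k)_k$ is decreasing, for every $k$ all the points $s_i^{(j)}$ with $j\geq k$ lie in the single compact set $S_k\subseteq S_1$, and the coefficient vectors $(\lambda_1^{(j)},\lambda_2^{(j)},\lambda_3^{(j)})$ lie in the compact simplex in $\R^3$. Passing to a subsequence along which all three sequences $(s_i^{(k)})_k$ converge, say $s_i^{(k)}\to s_i$, and all three coefficient sequences converge, say $\lambda_i^{(k)}\to\lambda_i$, I get $\lambda_i\geq 0$, $\sum_i\lambda_i=1$, and $z=\sum_{i=1}^3\lambda_i s_i$ by passing to the limit in the finite sum. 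It remains to check that each limit point $s_i$ lies in $S_\infty$: fix $m$; for all large $k$ in the subsequence we have $s_i^{(k)}\in S_k\subseteq S_m$, and $S_m$ is closed, so the limit $s_i$ lies in $S_m$; since $m$ was arbitrary, $s_i\in\bigcap_m S_m = S_\infty$. Hence $z\in\mathrm{conv}(S_\infty)$, which finishes the reverse inclusion.

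The main obstacle — really the only subtlety — is making sure the representation with a \emph{fixed} number (three) of summands is available simultaneously for all $k$, so that a single diagonal subsequence handles everything; this is exactly what Carathéodory in the plane provides, and it is why the statement is about $S_k\subset\C$ rather than an arbitrary Banach space. One should also note at the outset that $S_\infty$ is nonempty and compact (a decreasing intersection of nonempty compacts), so $\mathrm{conv}(S_\infty)$ is itself compact and the statement is not vacuous; this is not needed for the inclusions but is worth a remark. I would present the proof as: (i) the easy inclusion; (ii) Carathéodory to get uniform-length representations; (iii) the diagonal subsequence and passage to the limit; (iv) closedness of each $S_m$ to place the limit points in $S_\infty$.
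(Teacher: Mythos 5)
Your proposal is correct and follows essentially the same route as the paper's own proof: the trivial inclusion by monotonicity of the convex hull, and the reverse inclusion via Carathéodory's theorem in the plane to get representations with three points, followed by compactness of the simplex and of $S_1$ to extract a convergent subsequence whose limits land in $\bigcap_k S_k$ by closedness. Your remark that the intersection is nonempty and compact is a small addition the paper does not make here (it invokes Cantor's intersection theorem only later, in the proof of the main theorem).
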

\begin{proof}


Since $\bigcap_{k\geq 1} S_k \subseteq S_j$,  it follows that $\mathrm{conv}\big(\bigcap_{k\geq 1} S_k \big)\subseteq\Conv\big(S_j\big)$, for any $j$. It follows that $\mathrm{conv}\big(\bigcap_{k\geq 1} S_k \big)\subseteq\bigcap_{j\geq 1} \Conv\big(S_j\big)$.

For the reverse inclusion we use Carathéodory's theorem. More precisely, since $\C\cong\bR^2$, any $s \in  \Conv(S_k) \subseteq \C$ is a convex combination of at most $3$ points in $S_k$.  For each $k$, denote each of these points by $s^{(i)}_k \in S_k$, $i=1, 2, 3$. So $s=\sum_{i=1}^3 \alpha_k^{(i)}s^{(i)}_k$, for some $(\alpha_k^{(1)},\alpha_k^{(2)},\alpha_k^{(3)}) \in \Delta_{\bR^3}=\{(\alpha_1,\alpha_2,\alpha_3) \in \bR^3: \sum_{p=1}^{3} \alpha_p=1, \alpha_p\geq 0\}$. Recalling that $S_k$ is a decreasing sequence of sets, $s_k^{(i)}\in S_1$, for all $k$ and $i$. Compactness of $\Delta_{\bR^3}$ and $S_1$ allows us to take a subsequence $(k_n)_n$ such that $s^{(i)}_{k_n}$ and $\alpha^{(i)}_{k_n}$ are convergent subsequences to $s^{(i)}$ and $\alpha^{(i)}$, respectively. Clearly, the limit of $\left(\alpha^{(1)}_{k_n}, \alpha^{(2)}_{k_n}, \alpha^{(3)}_{k_n}\right)$ belongs to the simplex, that is $\left(\alpha^{(1)},\alpha^{(2)},\alpha^{(3)}\right) \in \Delta_{\bR^3}$. On the other hand,   $s^{(i)}_{k_n} \in S_{k_n}\subset S_j$ for all $k_n \geq j$. Since the sets $S_j$ are closed,  the limit of $s^{(i)}_{k_n}$ belongs to $S_j$, for any $j \in \N$.
Thus $s^{(i)} \in \bigcap_{k\geq 1} S_k$ and $s=\sum_{i=1}^3 \alpha^{(i)} s^{(i)} \in \Conv\left(\bigcap_{k\geq 1} S_k\right)$.

\end{proof}

Now we can prove our main theorem.

\begin{theorem}\label{BlocksThm}
Let $T=\bigoplus_{n\in\N}A_n \, \in \B(\Hi)$. Then,
\[
W_e(T)= \mathrm{conv}\Big(\bigcap_{k\geq 1}\overline{\bigcup_{n \geq k} W(A_n)}\Big).
\]
\end{theorem}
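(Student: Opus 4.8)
The plan is to prove the two inclusions separately, using the characterization of $W_e(T)$ via essential sequences (weakly null unit vectors with $\langle Tx_n,x_n\rangle$ convergent) together with the Lemma, which will let us move the convex hull outside the intersection. Throughout write $S_k=\overline{\bigcup_{n\geq k}W(A_n)}$, a decreasing sequence of sets; once we know these are compact (which holds when $T$ is bounded, since $\sup_n\|A_n\|<\infty$ forces all the $W(A_n)$, hence their closed union, into a fixed bounded set, and closedness is built in), the Lemma gives $\bigcap_k \mathrm{conv}(S_k)=\mathrm{conv}(\bigcap_k S_k)$, so it suffices to show $W_e(T)=\bigcap_k \mathrm{conv}(S_k)$.

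For the inclusion $\mathrm{conv}(\bigcap_k S_k)\subseteq W_e(T)$: first I would show each point $\omega\in\bigcap_k S_k$ lies in $W_e(T)$. Such an $\omega$ is a limit of points $\omega_{n_j}\in W(A_{n_j})$ with $n_j\to\infty$; choosing unit vectors $v_j\in\C^{\ell_{n_j}}$ with $\langle A_{n_j}v_j,v_j\rangle=\omega_{n_j}$ and transporting them via $\phi_{n_j}$ into $\Hi_{n_j}\subset\Hi$ produces unit vectors supported on blocks going to infinity, hence weakly null, with $\langle Tx_j,x_j\rangle=\omega_{n_j}\to\omega$; so $\omega\in W_e(T)$. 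Since $W_e(T)$ is convex and closed, it then contains $\mathrm{conv}(\bigcap_k S_k)$ — and here is where the Lemma is really needed, because we actually want to capture $\bigcap_k\mathrm{conv}(S_k)$; alternatively one builds essential sequences directly for finite convex combinations $\sum_i \alpha_i\omega_i$ with $\omega_i\in\bigcap_k S_k$ by splitting a unit vector into nearly-orthogonal pieces of squared norms $\approx\alpha_i$ sitting in far-apart blocks, then taking a diagonal/limit argument for the closure.

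For the reverse inclusion $W_e(T)\subseteq \mathrm{conv}(\bigcap_k S_k)=\bigcap_k\mathrm{conv}(S_k)$: take $\omega\in W_e(T)$ with an essential sequence $(x_n)_n$, $x_n\rightharpoonup 0$, $\langle Tx_n,x_n\rangle\to\omega$. Fix $k$; I want $\omega\in\mathrm{conv}(S_k)=\mathrm{conv}\big(\overline{\bigcup_{n\geq k}W(A_n)}\big)$. Decompose $x_n=(x_n^{(m)})_m$ along the blocks. Because $x_n\rightharpoonup 0$, for each fixed block index $m$ we have $x_n^{(m)}\to 0$ in norm, so the "mass" of $x_n$ on the first $k-1$ blocks tends to $0$; hence $\langle Tx_n,x_n\rangle=\sum_{m\geq 1}\langle A_m\phi_m^{-1}x_n^{(m)},\phi_m^{-1}x_n^{(m)}\rangle$ is, up to an $o(1)$ error, equal to $\sum_{m\geq k}\|x_n^{(m)}\|^2\,\eta_n^{(m)}$ where $\eta_n^{(m)}\in W(A_m)\subseteq S_k$ (taking $\eta_n^{(m)}=\langle A_m\phi_m^{-1}x_n^{(m)},\phi_m^{-1}x_n^{(m)}\rangle/\|x_n^{(m)}\|^2$ when $x_n^{(m)}\neq 0$). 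With $\beta_n^{(m)}=\|x_n^{(m)}\|^2$ normalized over $m\geq k$, this exhibits $\langle Tx_n,x_n\rangle$ as $o(1)$ plus an element of $\mathrm{conv}(S_k)$; since $S_k$ is compact, $\mathrm{conv}(S_k)$ is compact (Carathéodory plus compactness of the $3$-simplex, exactly as in the Lemma's proof), hence closed, so the limit $\omega$ lies in $\mathrm{conv}(S_k)$. As $k$ was arbitrary, $\omega\in\bigcap_k\mathrm{conv}(S_k)$, and the Lemma finishes it.

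The main obstacle I anticipate is the bookkeeping in the reverse inclusion: controlling the tail and making the "convex combination with an $o(1)$ error" rigorous requires care that $\sum_{m\geq k}\beta_n^{(m)}\to 1$ and that one may pass to a subsequence along which the relevant finite data converge, while genuinely infinite convex combinations must be reduced to finite ones — this is precisely why compactness of $S_k$ (hence of $\mathrm{conv}(S_k)$ via Carathéodory) is invoked rather than working with $\mathrm{conv}$ of an unbounded or non-closed set. A secondary point to get right is justifying that each $W(A_n)$ is actually contained in $S_k$ for $n\ge k$ and that boundedness of $T$ makes all these sets live in a common compact subset of $\C$, so that the Lemma applies verbatim.
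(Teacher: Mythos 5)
Your proposal is correct and follows essentially the same route as the paper: the same Lemma reduces the problem to showing $W_e(T)=\bigcap_{k\geq 1}\mathrm{conv}(S_k)$ with $S_k=\overline{\bigcup_{n\geq k}W(A_n)}$, the forward inclusion is obtained from essential sequences supported on blocks tending to infinity, and the reverse inclusion by noting that the mass of an essential sequence on the first $k$ blocks is $o(1)$ and renormalizing the tail into a convex combination of elements of $S_k$, whose compactness (via Carath\'eodory) lets one pass to the limit. The only real difference is in the forward direction, where you build essential sequences only for points of $\bigcap_k S_k$ and then invoke closedness and convexity of $W_e(T)$ (a standard fact from Fillmore--Stampfli--Williams), whereas the paper constructs an essential sequence directly for each $\ell^2$-convex combination $\sum_{n\geq k}(\alpha_n^{(k)})^2\omega_n^{(k)}$ followed by a diagonal argument for the closure; your shortcut is legitimate and slightly leaner, at the cost of relying on that external result.
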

\begin{proof}
We will prove that $W_e(T) =\bigcap_{k\geq1} \mathrm{conv}\Big(\overline{\bigcup_{n \geq k} W(A_n)}\Big)=:\mathcal{W} $. The Theorem then follows from the previous Lemma with $S_k=\overline{\bigcup_{n \geq k} W(A_n)}$.

Cantor's intersection Theorem guarantees the set $\mathcal{W}$ is non-empty. We will start by finding an essential sequence for any $\omega \in \mathcal{W}$, thus proving that $\mathcal{W} \subseteq W_e(T)$.
First, suppose that, for each $k\in\N$, $\omega \in \Conv\Big(\bigcup_{n \geq k} W(A_n)\Big)$. Then, there are $\omega_n^{(k)} \in W(A_n)$ and
$\left(\left(\alpha_n^{(k)}\right)^2\right)_{n\geq k}\in\Delta_{\ell^2(\R)}$ such that
 \[ \omega=\sum_{n\geq k} \left(\alpha_n^{(k)}\right)^2\omega_n^{(k)}.\]
Let $x_n^{(k)} \in \bS_{\C^{\ell_n}}$ be such that $\omega_n^{(k)}=\left\langle A_n x_n^{(k)}, x_n^{(k)}\right\rangle$, for all $n\geq k$. For each $k$ and $n$ we denote by $\hat{x}_{n}^{(k)}$ the counterpart in $\Hi_n$ of $x_n^{(k)} \in \bS_{\C^{\ell_n}}$. That is, according to the notation in the introduction, $\hat{x}_{n}^{(k)}=\phi_n(x_{n}^{(k)}) \in \Hi_n$.
Consider the sequence of vectors $y^{(k)}\in \Hi$ defined as follows:
\[
y^{(k)}= \sum_{n\geq k} \alpha_n^{(k)} \hat x_{n}^{(k)}, \quad k\in\N.
\]

From $\|\hat{x}_{n}^{(k)}\|=\|x_{n}^{(k)}\|=1$ and $\hat x_{n}^{(k)}  \bot \; \hat x_{m}^{(k)}$, for $n \neq m$, we may conclude that $\|y^{(k)}\|=1$:
\[
\left\lVert y^{(k)}\right\rVert^2= \sum_{n\geq k} \left(\alpha_n^{(k)}\right)^2 \left\lVert \hat x_{n}^{(k)}\right\rVert^2 = \sum_{n\geq k} \left(\alpha_n^{(k)}\right)^2 =1.
\]

Let us now see that $y^{(k)}  \underset{k}{\rightharpoonup} 0$ by proving that $\langle y^{(k)},z\rangle\to 0$, for any $z\in\Hi$. Write $z=\sum_{n\geq 1} z_n$, with $z_n \in \Hi_n$.
We have $\|z\|^2=\sum_{n\geq 1} \|z_n\|^2<\infty$. Thus, there exists $N$ such that, for $K\geq N$, the vector $z^{(K)}=\sum_{n\geq K} z_n$ has norm $\|z^{(K)}\|\leq \e$. Since $y^{(K)} \in \bigcup_{n\geq K}\Hi_n$ and $\|y^{(K)}\|=1$ we have
\begin{equation} \label{epsiloneq}
    \Big|\langle y^{(K)}, z \rangle\Big|=\Big|\langle y^{(K)}, z^{(K)} \rangle\Big|\leq \|y^{(K)}\| \|z^{(K)}\| \leq \e, 
\end{equation}
for any $z\in \Hi$. Thus $y^{(k)}  \underset{k}{\rightharpoonup} 0$.

Finally, since $T$ is the block diagonal operator $T=\bigoplus_{n\in\N}A_n$,  we have
\begin{align*}
\inp{T\hat x_n^{(k)}}{\hat x_n^{(k)}}=\inp{A_n x_n^{(k)}}{x_n^{(k)}}= \omega_n^{(k)}.
\end{align*}
 So it easily follows, using that $T \hat x_{n}^{(k)} \in \Hi_n$ and $\Hi_n\bot \Hi_m$ for $n \neq m$, that
\begin{equation} \label{omegaeq}
\langle T y^{(k)}, y^{(k)}\rangle  = \sum_{n\geq k} \left(\alpha_n^{(k)}\right)^2 \inp{T\hat x_n^{(k)}}{\hat x_n^{(k)}}  =\sum_{n\geq k} \left(\alpha_n^{(k)}\right)^2\omega_n^{(k)}= \omega.
\end{equation}
Thus $\left(y^{(k)}\right)_k$ is an essential sequence for $\omega$, that is, $\omega\in W_e(T)$.

Now suppose that, for each $k\in\N$, $\omega \in \Conv\left(\overline{\bigcup_{n \geq k} W(A_n)}\right)=\overline{\Conv\left(\bigcup_{n \geq k} W(A_n)\right)}$. So, there is a sequence $(\omega_m)_m\subset \Conv\left( \bigcup_{n \geq k} W(A_n) \right)$ converging to $\omega$. According to the previous discussion, each element $\omega_m$ has an essential sequence $\left(y^{(k)}_m\right)_k$, with unitary $y^{(k)}_m\in \bigcup_{n\geq k} \Hi_n$ and, by (\ref{omegaeq}), $\left\langle T y_m^{(k)}, y_m^{(k)}\right\rangle=\omega_m$, for any $k$. Forming a sequence from the diagonal elements $\Big(y_m^{(m)}\Big)_m$, we thus have that $\|y_m^{(m)}\|=1$ and $\langle T y_m^{(m)}, y_m^{(m)}\rangle =\omega_m$. Observing that $y_m^{(m)} \in \bigcup_{n \geq m} \Hi_n$, and using arguments similar to those that led to equation (\ref{epsiloneq}), we can conclude that $y_m^{(m)} \rightharpoonup 0$. Hence, $\left(y_m^{(m)}\right)_m$ is an essential sequence for $\omega$ and we have then proved that $\omega \in \Conv\left(\overline{\bigcup_{n \geq k} W(A_n)}\right)$. Therefore, $\omega \in \bigcap_{k\geq1} \mathrm{conv}\Big(\overline{\bigcup_{n \geq k} W(A_n)}\Big)\subset\mathcal{W}$.

To prove the converse, take $ \omega \in W_e(T)$ and one of its essential sequences $\left(y^{(m)}\right)_m$ in $\bS_\Hi$. Take arbitrary $\e>0$ and $k\in\N$. Denote by $P_{k}$ the projection onto $\bigcup_{n\leq k} \Hi_n$. The fact that $y^{(m)}  \underset{m}{\rightharpoonup} 0$ implies that the projection over any finite number of coordinates converges strongly to zero, for any $k \in \N$, hence  we must have  $ \left\| P_{k}y^{(m)}\right\| \underset{m}{\longrightarrow} 0$. Together with the hypothesis that $\left\langle Ty^{(m)}, y^{(m)} \right\rangle \to \omega$, we can pick a positive integer $N$  that simultaneously satisfies $\left\| P_{k}y^{(N)}\right\|<\e$ and $|\omega-\inp{Ty^{(N)}}{y^{(N)}}|<\e$. For simplicity, we denote $y=y^{(N)}$ and $\tilde \omega=\inp{Ty}{y}$. Thus, we can rewrite our previous conditions as
\begin{eqnarray}\label{desig_epsilon}
 \big\| P_{k}y\big\| < \e
\quad
\text{and} \quad |\omega-\tilde \omega|\leq \e.
\end{eqnarray}

The strategy to show that $\omega\in\Conv\Big(\overline{\bigcup_{n \geq k} W(A_n)}\Big)$ will be to construct a convex combination $\sum_{n \geq k+1} \alpha_n\langle A_n x_n, x_n\rangle$ of elements $\langle A_n x_n, x_n\rangle \in W(A_n)$, for  $n \geq k+1$. By proving that this convex combination is arbitrarily close to $\tilde \omega$, we prove that $\tilde \omega$ is arbitrarily close to $\Conv\Big(\bigcup_{n \geq k+1} W(A_n)\Big)$.  But from (\ref{desig_epsilon}), $\tilde \omega$ was arbitrarily close to $\omega$ and therefore this will allow us to conclude that $\omega$ is arbitrarily close to $\Conv\Big(\bigcup_{n \geq k+1} W(A_n)\Big)$. That is, $\omega \in \overline{\Conv\Big(\bigcup_{n \geq k+1} W(A_n)\Big)}=\Conv\Big(\overline{\bigcup_{n \geq k+1} W(A_n)}\Big)$. Since this is true for each $k\in\N$, we have
\[
\omega \in \bigcap_{k \geq 1} \Conv\Big(\overline{\bigcup_{n \geq k} W(A_n)}\Big)=\mathcal{W}.
\]

To construct the referred convex combination  $\sum_{n \geq k+1} \alpha_n\langle A_n x_n, x_n\rangle$, recall that $y=y^{(N)}$ and write  $y=(y_n)_n \in \Hi$, with $y_n \in \Hi_n$. Let $\hat x_n \in \Hi_n$ be the unitary vectors with the same direction of that of $y_n \in \Hi_n$, that is, $\|y_n\|\hat x_n=y_n$.
Denote $x_n=\phi_n^{-1}(\hat x_n)  \in \C^{\ell_n}$, for $n \geq k$, and $\beta= \frac{1}{\| (I- P_{k}) y\| ^2} $. Note that $1=\|y\|^2=\|P_{k} y\| ^2+\| (I- P_{k}) y\| ^2$ and recall from  (\ref{desig_epsilon}) that $\|P_{k} y\| ^2< \e^2$.   Hence, $\| (I- P_{k}) y\| ^2>1-\e^2>0$ if we pick $\e$ small enough.

Let $ \alpha_n=\frac{\| y_n\|^2}{ \|(I- P_{k}) y\| ^2}\geq 0$. Since  $\sum_{n \geq k+1}\|y_n\|^2= \|(I- P_{k}) y\| ^2$ then $\sum_{n\geq k+1} \alpha_n= 1$. We thus obtain
\[
\sum_{n \geq k+1} \alpha_n\langle A_n x_n, x_n\rangle=\sum_{n \geq k+1} \alpha_n\langle T_n \hat x_n, \hat x_n\rangle=\beta \sum_{n\geq k+1}\langle T_n y_n,  y_n \rangle=\beta\Big(\tilde\omega-\sum_{n \leq k} \left \langle T_n y_n,  y_n \right\rangle\Big).
\]
%
%
Using (\ref{desig_epsilon}) again, we have $ \| P_{k} y\|^2 = 1- \|\left(I- P_{k}\right)y\| ^2 < \varepsilon^2$ and  $\beta-1< \frac{1}{1-\varepsilon^2}-1=\frac{\e^2}{1-\e^2}$. Hence,
\begin{align*}
\Big| \sum_{n \geq k+1} \alpha_n\inp{ A_n x_n}{x_n}-\tilde \omega\Big|& = \Big| \beta \big( \tilde \omega-\sum_{n \leq k}\inp{ T_n y_n}{y_n}\big)-\tilde\omega \Big|\\
& \leq  (\beta-1) \left|\tilde \omega \right|+\beta \big| \sum_{n \leq k} \langle T_n  y_n,  y_n \rangle\big|\\
&< \frac{\e^2}{1-\e^2} \left|\tilde \omega\right| + \beta\big|\inp{T\big(P_ky\big)}{P_ky}\big| \\\
&< \frac{\e^2}{1-\e^2} |\tilde \omega| + \beta  \|T\|\e^2.
\end{align*}
We have just proved that for any $k$ the convex combination $\sum_{n \geq k+1} \alpha_n\inp{ A_n x_n}{x_n}$ is arbitrarily close to $\tilde \omega$, as intended.
\end{proof}

Here are two simple applications of the above result.

\begin{example}\label{ex_diag}
If $T=\bigoplus_{n\in\N}A_n \, \in \B(\Hi)$, with $A_n=\left[\lambda_n\right]\in\M_{1\times 1}(\C)$, for all $n\in\N$, then $T$ is a diagonal operator. Recall from our discussion in the introduction the definition of closed limit superior of a sequence of sets. Theorem \ref{BlocksThm} says that
\[
W_e(T)= \Conv\Big(\bigcap_{k\geq 1}\overline{\bigcup_{n \geq k} \left\{\lambda_n\right\} }\Big)=\Conv\left(\overline\lim \left\{\lambda_n\right\}\right).
\]
\end{example}

\begin{example}
Let $T=\bigoplus_{n\in\N}A_n \, \in \B(\Hi)$, with $A_n=A$, for all $n\in\N$. Then, applying theorem \ref{BlocksThm} and the fact that $W(A)$ is convex and closed, we find that
\[
W_e(T)= \Conv\Big(\bigcap_{k\geq 1}\overline{\bigcup_{n \geq k} W(A)}\Big)=W(A).
\]

\end{example}
Theorem \ref{BlocksThm} gives us a way to find the essential numerical range of $T=\bigoplus_n T_n$ for any block decomposition $(T_n)_n$ of $T$. As previously stated, the block operator decomposition is not unique and for a particular decomposition, judiciously chosen, the result  can be further simplified. We can find a way to decompose the operator $T$ as $T=\bigoplus_n T_n$ so that the convex hull can be dismissed in the formula in theorem \ref{BlocksThm}. The relevance of the next proposition in terms of practical calculations is reduced. In fact, it is harder to explicitly describe the decomposition constructed therein than it is to directly use the result from the preceding theorem. However, theoretically, it may be useful to have such result at hand. A formulation close to the one in our proposition is used in (\cite{A}). Nevertheless, besides containing an inaccuracy in the positioning of the closure, there is no reference to the fact that it holds true only for some decompositions of $T$. Also, the result is stated without a proof.

Next proposition proves rigorously the existence of a particular decomposition where the essential numerical range can be described without the convexification, but before that we need two preparing lemmata. The first of these lemmas  states that the proposition is invariant under translations.
\begin{lemma}\label{translation}
Let $z \in \C$ and let $T=\bigoplus T_n \in B(\Hi)$ be a block diagonal operator. 
\[
W_e(T)= \bigcap_{k\geq 1}\overline{\bigcup_{n \geq k} W(T_n)} \quad 
\text{if and only if}\quad W_e(T-zI)= \bigcap_{k\geq 1}\overline{\bigcup_{n \geq k} W(T_n-zI)}.
\]
\end{lemma}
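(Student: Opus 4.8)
The plan is to observe that translation by the scalar $z$ is an affine homeomorphism of $\C$ under which both sides of the claimed identity transform in exactly the same way, so that the two statements are literally equivalent via a bijection. Concretely, write $X=\bigcap_{k\geq 1}\overline{\bigcup_{n\geq k}W(T_n)}$; I will show $W_e(T)=X$ if and only if $W_e(T-zI)=X-z$, and that $X-z$ is precisely $\bigcap_{k\geq 1}\overline{\bigcup_{n\geq k}W(T_n-zI)}$.

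First I would record that $T-zI$ is again block diagonal with respect to the same decomposition $\Hi=\bigoplus_n\Hi_n$, with components $T_n-zI_{\Hi_n}$, since $zI=\bigoplus_n zI_{\Hi_n}$; hence the right-hand side of the second identity is meaningful. Next I would establish two elementary translation identities. (a) $W_e(T-zI)=W_e(T)-z$: for any $K\in\K(\Hi)$ and any unit $x$ one has $\langle(T+K-zI)x,x\rangle=\langle(T+K)x,x\rangle-z$, so $W(T+K-zI)=W(T+K)-z$ and thus $\overline{W(T+K-zI)}=\overline{W(T+K)}-z$; since the map $S\mapsto S-z$ commutes with arbitrary intersections, intersecting over $K\in\K(\Hi)$ gives the claim. (b) $\bigcap_{k\geq1}\overline{\bigcup_{n\geq k}W(T_n-zI)}=X-z$: indeed $W(T_n-zI)=W(T_n)-z$, and $S\mapsto S-z$ commutes with unions, with closures (it is a homeomorphism of $\C$), and with intersections.

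Finally, combining (a) and (b): the hypothesis $W_e(T)=X$ is equivalent, upon applying the bijection $S\mapsto S-z$ to both sides, to $W_e(T)-z=X-z$, i.e.\ $W_e(T-zI)=\bigcap_{k\geq1}\overline{\bigcup_{n\geq k}W(T_n-zI)}$, which is exactly the second identity. The converse direction follows by the same argument with $z$ replaced by $-z$, or simply because the chain of implications above is in fact a chain of equivalences. There is no genuine obstacle here; the only two points deserving a line of justification are that the block structure of the decomposition is inherited by $T-zI$, and that the operations of closure, countable union, and intersection all commute with the rigid translation $S\mapsto S-z$.
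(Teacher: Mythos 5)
Your proposal is correct and follows essentially the same route as the paper: note that $T-zI$ is block diagonal with components $T_n-zI$, use $W_e(T-zI)=W_e(T)-z$ and $W(T_n-zI)=W(T_n)-z$, and observe that translation commutes with unions, closures, and intersections. The only difference is that you justify $W_e(T-zI)=W_e(T)-z$ from the definition, which the paper simply asserts.
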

\begin{proof}
We start by noting that if $T$ is the block diagonal operator with  decomposition $T=\bigoplus T_n$ then  $T-zI$ is also a block diagonal operator with decomposition $\bigoplus T_n-zI=\bigoplus \left(T_n-zI\right)$.

Since $W_e(T-zI)=W_e(T)-z$ and $W(T_n)=W(T_n-zI)+z$, if
\[
W_e(T)= \bigcap_{k\geq 1}\overline{\bigcup_{n \geq k} W(T_n)},\]
we have that
\[
W_e(T-zI) = W_e(T)-zI = \bigcap_{k\geq 1}\overline{\bigcup_{n \geq k} W(T_n)-zI} = 
\bigcap_{k\geq 1}\overline{\bigcup_{n \geq k} W(T_n-zI)}.
\]
The other implication follows from the previous one, considering $\tilde{T}=T+zI$.
\end{proof}

On a convex set $A \subset\C$ a point $a \in A$ is an extreme point if it is not a convex combination of any other points in the set $A$. That is, we say that $a$ is an extreme point of $A$ if when $a,b,c \in A$ and $a=\alpha b+(1-\alpha) c$, for some $\alpha \in [0,1]$, then $b=c=a$. We denote by $\E(T)$ the set of extreme points of $W_e(T)$.   Since $W_e(T)$ is compact and convex, Krein-Milman theorem asserts that $W_e(T)=\Conv\left(\E(T)\right)$.

On the other hand, for any $\omega \in \C\setminus \{0\}$ we can write uniquely $\omega=|\omega|e^{i\theta}$, for some $\theta \in [0, 2\pi)$. Thus we can define an angle function  $\theta: \C\setminus \{0\} \to [0, 2\pi)$. Denote  $\theta_{|A}$  the  restriction  of the function $\theta$ to the set $A$. Next lemma shows that we can find a translation $T-zI$ of the operator $T$ where $\theta_{|\E(T-zI)}$, the restricted angle function to the set of extreme points  $\E(T-zI)$, is injective. That is, if we apply an adequate translation there are no two  extreme points of  $\E(T-zI)$ with the same angle. 
\begin{lemma}\label{theta_injectivity}
Let $T \in B(\Hi)$. There exists $z \in \C$ such that  $\theta_{|\E(T-zI)}$ is injective.
\end{lemma}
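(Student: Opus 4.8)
The plan is to exploit the fact that $W_e(T)$ is a compact convex subset of $\mathbb{C}\cong\mathbb{R}^2$, so its set of extreme points $\mathcal{E}(T)$ is a relatively small set: the boundary $\partial W_e(T)$ is a rectifiable curve (if the set has nonempty interior) or a segment/point, and in either case $\mathcal{E}(T)$ is contained in this boundary. The obstruction to $\theta_{|\mathcal{E}(T)}$ being injective at a fixed origin is the existence of two extreme points lying on the same ray from the origin. First I would reduce to the case $\dim W_e(T)=2$, since if $W_e(T)$ is a point or a segment one can translate so the relevant ray hits it in at most one point (a segment not passing through $0$ meets each ray from $0$ at most once; a point is trivial; and a segment through $0$ can be moved off $0$ by an arbitrarily small translation). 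In the two-dimensional case, the key geometric observation is that if a ray from $z$ meets the convex body $W_e(T)$, it meets $\partial W_e(T)$ in at most two points, and it can contain two extreme points only when the two boundary intersection points are both extreme — which forces the chord between them to be an edge only in degenerate position; more usefully, for each direction $\theta$ the ray from $z$ in direction $\theta$ meets $\mathcal{E}(T-zI)$ in at most two points, the "near" and "far" intersections with the boundary.

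The heart of the argument is then a counting/measure-theoretic step. For a fixed base operator $T$ and a candidate translation parameter $z$, non-injectivity of $\theta_{|\mathcal{E}(T-zI)}$ means there exist two distinct extreme points $p,q\in\mathcal{E}(T)$ (extreme points are unaffected by translation as a set, only the angle labelling changes) that are collinear with $z$, i.e. $z$ lies on the line through $p$ and $q$. So the "bad" set of $z$ is $\bigcup_{\{p,q\}}\ell_{p,q}$, a union of lines indexed by pairs of extreme points. I would argue this bad set cannot be all of $\mathbb{C}$: if $W_e(T)$ has at least three extreme points (otherwise injectivity is immediate after the reduction above), pick three non-collinear extreme points $p_1,p_2,p_3$; the lines they determine among themselves form a finite union, and more importantly, for a generic $z$ no line through $z$ can contain two extreme points unless those extreme points are "visible" from $z$ along a common ray. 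The cleanest route: note that for each pair $\{p,q\}$ of extreme points, the set of $z$ with $z,p,q$ collinear is a line (measure zero, nowhere dense); but there may be uncountably many extreme points. To handle this, observe that two boundary points $p,q$ of the convex body $W_e(T)$ with $p\ne q$ are collinear with $z\notin W_e(T)$ and both visible from $z$ only if the segment $[p,q]$ lies on $\partial W_e(T)$ — i.e. only if $p,q$ both lie on a common supporting line segment (edge) of $W_e(T)$; but a planar convex body has at most countably many maximal boundary segments, and each such edge has at most two extreme points (its endpoints). So in fact for $z$ outside $W_e(T)$, the only pairs of extreme points collinear-and-visible from $z$ come from the (countably many) edges, hence the bad set of such $z$ is a countable union of lines. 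Choosing $z$ outside $W_e(T)$ and off this countable union of lines (possible since $\mathbb{C}$ is not a countable union of lines) gives the result, with the extra check that the ray from $z$ through an interior direction cannot contain an extreme point and then a second extreme point beyond it, which is again the edge condition.

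The main obstacle is precisely the potential uncountability of $\mathcal{E}(T)$: one cannot naively say "a countable union of measure-zero sets is measure zero," so the real content is the convexity argument showing that only collinear extreme points that are endpoints of a common boundary edge can spoil injectivity from a point $z\notin W_e(T)$, together with the standard fact that a planar compact convex set has at most countably many (maximal) line segments in its boundary. I would write this as: (i) reduce to $W_e(T)$ two-dimensional with at least three extreme points; (ii) show that if $z\notin W_e(T)$ and $\theta_{|\mathcal{E}(T-zI)}$ fails injectivity, then two extreme points $p\ne q$ are collinear with $z$ and the open segment between the two of them that are "on the same side" lies in $\partial W_e(T)$, hence $p,q$ are endpoints of an edge of $W_e(T)$; (iii) enumerate the countably many edges $(E_j)_j$, let $\ell_j$ be the line containing $E_j$, and note also that for $z\notin W_e(T)$ a ray can only hit an interior-direction pair of extreme points via such an edge, so the bad $z$'s lie in $W_e(T)\cup\bigcup_j \ell_j$; (iv) since $\mathbb{C}$ is not a countable union of lines together with a set whose complement is nonempty — indeed $W_e(T)\cup\bigcup_j\ell_j$ has empty interior or at worst is a proper subset — pick $z$ in the complement. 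Then $\theta_{|\mathcal{E}(T-zI)}$ is injective.
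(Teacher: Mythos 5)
There is a genuine gap, and it sits exactly where you locate the ``real content'' of your argument: step (ii) is false. If $z\notin W_e(T)$ and $p\neq q$ are two extreme points lying on the same ray emanating from $z$, it does \emph{not} follow that $[p,q]$ lies on $\partial W_e(T)$, nor that $p,q$ are endpoints of a boundary edge. Take $W_e(T)$ to be the closed unit disc (this occurs, e.g., for the diagonal operator in the paper's last example): its extreme points form the unit circle, and \emph{every} ray from an exterior point $z$ that passes through the interior of the disc meets the circle in exactly two points, both extreme, whose chord is a secant through the interior rather than an edge. The ``visibility'' condition you invoke to force the edge configuration is irrelevant to the angle function: $\theta(p-z)=\theta(q-z)$ whenever $p$ and $q$ lie on the same ray from $z$, regardless of whether the farther point is hidden behind the nearer one. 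Consequently the bad set is not contained in $W_e(T)\cup\bigcup_j\ell_j$; for the disc it contains the entire exterior, so your final selection of $z$ outside $W_e(T)$ and off countably many lines always produces a bad $z$ in the strictly convex case, which is precisely the case where the countability-of-edges argument was supposed to do the work. (A smaller slip of the same kind occurs in your reduction for segments: the segment $[1,2]\subset\bR$ does not pass through $0$, yet both of its extreme points have angle $0$.)

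The remedy is to place $z$ \emph{inside} $W_e(T)$ rather than outside — more precisely, at a non-extreme point of $W_e(T)$ — and this makes the whole argument elementary, with no need to count edges. If $W_e(T)$ is a singleton the claim is immediate (translate off the origin if necessary). Otherwise pick two extreme points $\omega_1\neq\omega_2$ and take $z$ in the open segment between them with $z\neq 0,\omega_1,\omega_2$; then $0\in W_e(T-zI)\setminus\E(T-zI)$, so $\theta$ is defined on all of $\E(T-zI)$, and if two distinct extreme points $z_1,z_2$ of $W_e(T-zI)$ had the same angle with, say, $|z_1|<|z_2|$, then $z_1$ would be a proper convex combination of $0$ and $z_2$, both in the convex set $W_e(T-zI)$, contradicting extremality of $z_1$. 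This is the paper's proof.
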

\begin{proof}
In the case the essential numerical range of $T$ is a singleton, $W_e(T)=\{a\}=\E(T)$, there are two possibilities: either $a \neq 0$ and then $\theta_{|\E(T)}$ is trivially injective, or $a=0$, in which case  $W_e(T-zI)=\{-z\}=\E(T-zI)$, for any $z\in \C\setminus \{0\}$, and hence again $\theta_{|\E(T-zI)}$ is trivially injective.

Otherwise, if $W_e(T)$ is not a singleton, the set of extreme points $\E(T)$ has at least two elements, $\omega_1, \omega_2 \in \E(T)$. We can choose $z\in \ch\{\omega_1, \omega_2\} \subseteq W_e(T)$, with $z\not \in \{0,\omega_1, \omega_2\} $, and translate $W_e(T)$ by $z$. It is clear that $0 \in W_e(T-zI)\setminus \E(T-zI)$, since $z$ was an element in $W_e(T)$ and not in $\E(T)$. Thus the function $\theta_{|\E(T-zI)}$ is well-defined since $0$ is not in its domain. On the other hand, injectivity of $\theta_{|\E(T-zI)}$ is a consequence of $W_e(T-zI)$ being convex; if $\theta(z_1)=\theta(z_2)$, for some $z_1,z_2 \in \E(T-zI)$ with $|z_1|<|z_2|$, $z_1$ would be a convex combination of $0 \in W_e(T-zI)$ and $z_2 \in W_e(T-zI)$, thus not an extreme point of $W_e(T-zI)$.
\end{proof}
\begin{proposition}\label{proposition}
Let $T=\bigoplus_n T_n$ be a block diagonal operator. Then, there is a decomposition  for which $T=\bigoplus_n \tilde{T}_n$ and
\begin{equation}\label{We_specialdecom}
W_e(T)= \bigcap_{k\geq 1}\overline{\bigcup_{n \geq k} W(\tilde{T}_n)}.
\end{equation}
\end{proposition}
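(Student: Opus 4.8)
The strategy is to \emph{coarsen} the given decomposition. I will group the original blocks into consecutive finite packets $I_1<I_2<\cdots$ that partition $\N$, and set $\tilde T_m=\bigoplus_{n\in I_m}T_n$, choosing the packets large enough that the numerical range $W(\tilde T_m)$ comes arbitrarily close to covering $W_e(T)$ as $m\to\infty$. The point is that the numerical range of a finite orthogonal direct sum of matrices is the convex hull of the numerical ranges of the summands (the finite case of the formula recalled in the Introduction), so a sufficiently rich finite packet already contains, up to any prescribed $\e$, all of $W_e(T)$. Observe first that the reverse inclusion in \eqref{We_specialdecom} is then automatic: Theorem~\ref{BlocksThm} applied to $T=\bigoplus_m\tilde T_m$ gives $\bigcap_{m}\overline{\bigcup_{j\ge m}W(\tilde T_j)}\subseteq\Conv\!\left(\bigcap_{m}\overline{\bigcup_{j\ge m}W(\tilde T_j)}\right)=W_e(T)$, so only $W_e(T)\subseteq\bigcap_{m}\overline{\bigcup_{j\ge m}W(\tilde T_j)}$ requires work.

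The key technical step is: for every $\e>0$ and every $K_0\in\N$ there is a finite set $F\subseteq\{K_0,K_0+1,\dots\}$ with $W_e(T)\subseteq W\!\left(\bigoplus_{n\in F}T_n\right)+\e\D$. To prove it, put $S_k=\overline{\bigcup_{n\ge k}W(T_n)}$, which is nonempty, compact (bounded by $\|T\|$) and decreasing. By Theorem~\ref{BlocksThm}, $W_e(T)=\Conv\!\left(\bigcap_k S_k\right)\subseteq\Conv(S_{K_0})$. Since the convex hull commutes with closure for bounded planar sets and a convex combination uses only finitely many points, $\Conv(S_{K_0})=\overline{\bigcup_{F'}W\!\left(\bigoplus_{n\in F'}T_n\right)}$, where $F'$ runs over the finite subsets of $\{K_0,K_0+1,\dots\}$ and the union is directed by inclusion. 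Hence $W_e(T)$ is a compact subset of the closure of this directed union, so it admits a finite $\e$-net $\{q_1,\dots,q_r\}$ whose points lie in the union, say $q_i\in W\!\left(\bigoplus_{n\in F_i}T_n\right)$ for finite $F_i\subseteq\{K_0,\dots\}$. Putting $F=\bigcup_{i=1}^r F_i$ and using that adjoining blocks only enlarges a numerical range, every $q_i$ lies in $W\!\left(\bigoplus_{n\in F}T_n\right)$, whence $W_e(T)\subseteq\{q_1,\dots,q_r\}+\e\D\subseteq W\!\left(\bigoplus_{n\in F}T_n\right)+\e\D$.

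Granting the claim, I construct the decomposition recursively. Set $N_0=0$; given $N_{m-1}$, apply the claim with $\e=1/m$ and $K_0=N_{m-1}+1$ to obtain a finite $F_m\subseteq\{N_{m-1}+1,\dots\}$, and set $N_m=\max F_m$, $I_m=\{N_{m-1}+1,\dots,N_m\}$ (so $F_m\subseteq I_m$), and $\tilde T_m=\bigoplus_{n\in I_m}T_n$. The $I_m$ are consecutive and, since $N_m\ge N_{m-1}+1$, exhaust $\N$, so $T=\bigoplus_m\tilde T_m$ is a genuine block decomposition of $T$ with $\sup_m\|\tilde T_m\|\le\|T\|$. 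As $W(\tilde T_m)\supseteq W\!\left(\bigoplus_{n\in F_m}T_n\right)$, we get $W_e(T)\subseteq W(\tilde T_m)+\tfrac1m\D$ for all $m$. Therefore, for every $m_0\in\N$ and every $\omega\in W_e(T)$ we have $\mathrm{dist}(\omega,W(\tilde T_j))\le 1/j$ for all $j\ge m_0$, so $\omega\in\overline{\bigcup_{j\ge m_0}W(\tilde T_j)}$; since $m_0$ is arbitrary, $W_e(T)\subseteq\bigcap_{m\ge1}\overline{\bigcup_{j\ge m}W(\tilde T_j)}$. Combined with the reverse inclusion from the first paragraph, this proves \eqref{We_specialdecom}.

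I expect the claim of the second paragraph to be the only real obstacle: it is where one must combine $W_e(T)\subseteq\Conv(S_{K_0})$ (Theorem~\ref{BlocksThm}) with the representation of $\Conv(S_{K_0})$ as the closure of the directed union of the numerical ranges of the finite sub-direct-sums — essentially the finite form of $W\!\left(\bigoplus_{n\in F}T_n\right)=\Conv\!\left(\bigcup_{n\in F}W(T_n)\right)$ — and then a routine $\e$-net argument; everything else is bookkeeping. I note that this argument uses Theorem~\ref{BlocksThm} but not Lemmas~\ref{translation} and~\ref{theta_injectivity}, which instead support an alternative proof organized around the extreme points $\E(T)$.
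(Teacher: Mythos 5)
Your proof is correct, and it takes a genuinely different route from the paper's. The paper first normalizes $T$ by a translation so that the angle function is injective on the set $\E$ of extreme points of $W_e(T)$ (its Lemmas \ref{translation} and \ref{theta_injectivity}), then builds each packet $\tilde T_m$ so that its numerical range comes within $\e/m$ of one chosen extreme point per angular sector $2\pi[j/m,(j+1)/m)$; injectivity of the angle is what converts angular approximation into genuine approximation of every extreme point, and convexity of $d(\cdot,\tilde W_N)$ then upgrades the inclusion from $\E$ to $\Conv(\E)=W_e(T)$ via Krein--Milman. You instead bypass extreme points entirely: using $W_e(T)\subseteq\Conv\big(\overline{\bigcup_{n\ge K_0}W(T_n)}\big)$ from Theorem \ref{BlocksThm}, the identity $W\big(\bigoplus_{n\in F}T_n\big)=\Conv\big(\bigcup_{n\in F}W(T_n)\big)$ for finite $F$ (the same folklore fact the paper invokes for its packets), and compactness of $W_e(T)$, you extract a single finite packet of tail blocks whose numerical range is $\e$-close to all of $W_e(T)$ at once, which yields the quantitative statement $W_e(T)\subseteq W(\tilde T_m)+\tfrac1m\overline{\D}$ directly. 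The two small points to check in your argument — that the directed union of the $W\big(\bigoplus_{n\in F'}T_n\big)$ is exactly $\Conv\big(\bigcup_{n\ge K_0}W(T_n)\big)$, and that an $\e$-net of a compact subset of $\overline{U}$ can be chosen inside $U$ — both hold for the standard reasons you indicate. Your version is shorter and avoids the translation/angle machinery altogether (so Lemmas \ref{translation} and \ref{theta_injectivity} become unnecessary); the paper's version isolates the geometric role of the extreme points, at the cost of the extra normalization. Both proofs rest on Theorem \ref{BlocksThm} for the easy inclusion.
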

\begin{proof}
Lemma \ref{theta_injectivity} assures the existence of $z\in\C$ such that $\theta_{|\E(T-zI)}$ is injective. If we prove \eqref{We_specialdecom} for $W_e(T-zI)$, from lemma \ref{translation} this result holds true for $W_e(T)$. Therefore, we can assume without loss of generality that $T$ is such that $\theta_{|\E(T)}$ is injective. 

In order to simplify notation, write $W_n=W(T_n)$, $\tilde W_n=W(\tilde T_n)$, $W_e=W_e(T)$ and $\E=\E(T)$. First, observe that for any decomposition of $T$ we have
\begin{equation}\label{ext_in_intersection}
\E \subset \bigcap_{k\geq 1}\overline{\bigcup_{n \geq k} W_n}.
\end{equation}
This follows from Theorem \ref{BlocksThm}, which asserts that
$
W_e= \Conv\Big(\bigcap_{k\geq 1}\overline{\bigcup_{n \geq k} W_n}\Big)
$
and by definition of extreme point.

Moreover, note that  if $\omega_e$ is a point in $\overline{\bigcup_{n \geq k} W_n}$, for all $k\in\N$, then for any $\zeta>0$ there is a $\gamma_{n_k}\in W_{n_k}$, with $n_k\geq k$, such that $\left|\gamma_{n_k}-\omega_e\right|<\zeta$. Hence, an extreme point $\omega_e$ of $W_e$ has the following property:
\begin{equation}\label{infinite_seq}
\text{for any } \zeta>0 \text{ there is a sequence } (n_k)_k, \text{ with } n_k\geq k, \text{ such that }  d\big(\omega_e, W_{n_k}\big)<\zeta.
\end{equation}

Consider an arbitrary $\e>0$. We next construct a decomposition $\left(\tilde T_m\right)_m$ of the operator $T$ for which the result holds.

When $m=1$ pick one element $\omega_e \in \E$. In view of (\ref{infinite_seq}), take $M_1$ to be such that $d(\omega_e,W_{M_1})<\e$ and let $\tilde T_1=\bigoplus_{p=1}^{M_1} T_p$.

For $m \geq 2$, we start by dividing the interval $[0, 2\pi)$ into $m$ subintervals $2\pi\left[j/m, (j+1)/m\right)$, for $j=0, \ldots, m-1$. If $\theta(\E) \bigcap 2\pi\left[j/m, (j+1)/m\right)\neq\emptyset$, pick any $\omega_e \in \E$ such that $\theta(\omega_e) \in 2\pi\left[j/m, (j+1)/m\right)$. Denote such point by $\omega_j^{(m)}$. For those pairs $(m,j)$ with
\[
\theta(\E) \cap 2\pi\left[\frac{j}{m}, \frac{j+1}{m}\right)=\emptyset,
\]
we let $\omega_j^{(m)}= \omega_{j'}^{(m)}$, where $j' \in \{0, \dots, m-1\}$ such that $\theta(\E) \cap 2\pi\left[\frac{j'}{m}, \frac{j'+1}{m}\right)\neq \emptyset$.
According to (\ref{infinite_seq}), for each of these $m$ points $\omega_j^{(m)}$ in $\E$ there is an integer $m_j \geq M_{m-1}+1$ such that $d\big(\omega_j^{(m)}, W_{m_j}\big)<\e/m$. Let $M_m=\max_{0\leq j\leq m-1} m_j$ and define
\[
\tilde T_m=\bigoplus_{p=M_{m-1}+1}^{M_m} T_p.
\]
Since $M_{m}>M_{m-1}$, we have that $M_m \underset{m}{\longrightarrow} +\infty$ and it is clear that $T$ is the direct sum of the operators $\tilde T_m$. 

Claim: the decomposition $\left(\tilde T_m\right)$ verifies (\ref{We_specialdecom}).
We start by proving the inclusion $W_e\subset \bigcap_{k\geq 1}\overline{\bigcup_{n \geq k} \tilde{W}_n}$. Recall that $\tilde W_m=W(\tilde T_m)$. So, for each $m\geq 1$,
\[
\tilde W_m=W\left(\bigoplus_{p=M_{m-1}+1}^{M_m} T_p\right)=\Conv\big\{W_p:M_{m-1}+1\leq p \leq {M_m}\big\}.
\]
Since $d\left(\omega_j^{(m)}, W_{m_j}\right)<\e/m$ and $M_{m-1}<m_j\leq M_m$, for all $j=0,\ldots, m$, then
\begin{equation}\label{dist_Wm}
d\left(\omega_j^{(m)}, \tilde W_m\right)\leq d\left(\omega_j^{(m)}, W_{m_j}\right)  < \e/m.
\end{equation}


Let $\omega_e \in \E$.
For any $m$, there is $j \in \{0, \dots, m-1\}$ 
such that $\theta(\omega_e) \in 2\pi[j/m, (j+1)/m)$. Then, the corresponding $\omega_j^{(m)}$, whose angle $\theta(\omega_j^{(m)})$  belongs to the same interval, satisfies $\left|\theta(\omega_e)-\theta\left(\omega_j^{(m)}\right)\right|<  2\pi/m$. Denote such $\omega_j^{(m)}$ by $\gamma_m$. We thus obtain a sequence $(\gamma_m)_m$ such that  $\theta(\gamma_m) \to \theta(\omega_e)$. Since $\gamma_m, \omega_e \in \E$ and  $\theta(\gamma_m) \to \theta(\omega_e)$,  by injectivity of $\theta$ in $\E$, we have $ \gamma_m \to \omega_e$.
So, for any $\delta>0$, we can pick $M\in\N$ such that, for $N\geq M$,  $d(\gamma_N, \omega_e)<\delta/2$ and $d\big(\gamma_N, \tilde W_{N}\big)< \e/N< \delta/2$ (see (\ref{dist_Wm})), which implies that
\begin{equation}\label{dist_WN}
d\big(\omega_e, \tilde W_{N}\big)\leq d(\omega_e, \gamma_N)+d\big(\gamma_N, \tilde W_{N}\big)< \delta.
\end{equation}
Hence, we have $\E \subseteq \bigcap_{k \geq 1}\overline{\bigcup_{n \geq k}\tilde W_{n}}$.

We now claim that $\Conv(\E) \subset \bigcap_{k \geq 1}\overline{\bigcup_{n \geq k}\tilde{W_n}}$. If $\omega \in  \Conv(\E)$ there are $\omega_e^i \in \E$, for $i=1,2,3$, such that $\omega=\sum_i \alpha_i \omega_e^i$, for some $(\alpha_1,\alpha_2,\alpha_3) \in \Delta_{\bR^3}$. From (\ref{dist_WN}), we know that for any $\delta>0$, there is  $M_i$ such that $d(\omega_e^i, \tilde W_N)<\delta$ for $N \geq M_i$. Then, for $N \geq \max\{M_1,M_2,M_3\}$ it follows that
\[
d(\omega, \tilde W_N)=d\left( \sum_i \alpha_i \omega_e^i, \tilde W_N\right) \leq  \sum_i \alpha_i  d\left(\omega_e^i, \tilde W_N\right) < \delta,
\]
using the fact that the set $\tilde W_N$ is convex and so the function $d(\cdot,\tilde W_N)$ is convex. Therefore, for each $k\in\N$,  there is  $N\geq k$ such that $d\left(\omega, \tilde W_N\right)<\delta$.  It follows that $d\left(\omega, \bigcup_{n \geq k}\tilde{W_n}\right)<\delta$. Since $\delta$ was arbitrary, we conclude that $\omega\in\overline{\bigcup_{n \geq k}\tilde{W_n}}$, for the given $k$. That is, $\omega\in\bigcap_{k \geq 1}\overline{\bigcup_{n \geq k}\tilde{W_n}}$ and so
\[W_e=\Conv\left(\E\right)\subset \bigcap_{k\geq 1}\overline{\bigcup_{n \geq k} \tilde{W}_n}.\]
Now,  since theorem \ref{BlocksThm} is valid for any decomposition, we have
\[
\Conv(\E)= W_e=\bigcap_{k \geq 1}\Conv\left(\overline{\bigcup_{n \geq k}\tilde{W_n}}\right) \supset \bigcap_{k \geq 1}\overline{\bigcup_{n \geq k}\tilde{W_n}}.
\]
The result then follows.\end{proof}
Next example elucidates that to find explicitly a decomposition as in proposition \ref{proposition} can be tricky. However, Theorem \ref{BlocksThm} makes possible to easily compute the essential numerical range.
\begin{example}
Let $\varphi: \N\to \Q\cap [0,2\pi]$ be any bijection and $\Theta: [0,2\pi[\to \bS_{\C}$ the map defined by $\Theta (\theta)=e^{i\theta}$. Note that $\Theta$ is continuous and bijective. Now, define the composite map \[
\phi=\Theta\circ\varphi: \N\to\bS_{\C}\,,\,\phi(n)=e^{i\varphi(n)}.
\]
Then, $\phi$ is also bijective. Define the bounded block diagonal operator $T=\diag\left\{\phi(n)\right\}=\bigoplus A_n$ with $A_n=\left[\phi(n)\right]$, for each $n\in\N$.
As seen in example \ref{ex_diag},
\[
W_e(T)=\Conv\left(\overline\lim \left\{\phi(n)\right\}_n\right).
\]
Since $\overline\lim \left\{\phi(n)\right\}_n=\bS_\C$ and $W_e(T)$ is convex, we conclude that $W_e(T)=\{x\in \C: \|x\|\leq 1\}$ is the closed unit disc.

%
%
\end{example}

\end{document}